\def\gianni #1{{\color{red}#1}} 
\def\pier #1{{\color{blue}#1}}
\def\ionut #1{{\color{Green}#1}}
\def\rev #1{{\color{red}#1}}
\def\gianni #1{#1} 
\def\pier #1{#1}
\def\ionut #1{#1}
\def\rev #1{#1}
\newtheorem{theorem}{\bf Theorem}[section]
\newtheorem{proposition}{\bf Proposition}[section]
\newtheorem{lemma}{\bf Lemma}[section]
\newtheorem{remark}{\bf Remark}[section]
\newcommand{\beq}{\begin{equation}}
\newcommand{\eeq}{\end{equation}}
\newcommand{\beqn}{\begin{eqnarray}}
\newcommand{\eeqn}{\end{eqnarray}}
\newcommand{\bear}{\begin{array}}
\newcommand{\eear}{\end{array}}
\newcommand{\beit}{\begin{itemize}}
\newcommand{\eeit}{\end{itemize}}
\newcommand{\beqno}{\begin{eqnarray*}}
\newcommand{\eeqno}{\end{eqnarray*}}
\newcommand{\pdd}[1]{\frac{\partial #1}{\partial\textbf{n}}}
\let\theta\vartheta
\def\eqref #1{(\ref{#1})}
\numberwithin{equation}{section}
\begin{document}
\thispagestyle{plain}
\title{Stabilization of \ionut{a linearized Cahn--Hilliard system for phase separation} by proportional boundary feedbacks}
\author{}
\date{}
\maketitle
\begin{center}
\vskip-1cm
{\large\sc Pierluigi Colli$^{(*)}$}\\
{\normalsize e-mail: {\tt pierluigi.colli@unipv.it}}\\
[.25cm]
{\large\sc Gianni Gilardi$^{(*)}$}\\
{\normalsize e-mail: {\tt gianni.gilardi@unipv.it}}\\
[.25cm]
{\large\sc Ionut Munteanu$^{(**)}$}\\
{\normalsize e-mail: {\tt ionut.munteanu@uaic.ro}}\\
[.45cm]
$^{(*)}$
{\small Dipartimento di Matematica ``F. Casorati'', Universit\`a di Pavia}\\
{\small and Research Associate at the IMATI -- C.N.R. Pavia}\\
{\small via Ferrata 5, 27100 Pavia, Italy}\\
[.25cm]
$^{(**)}$
{\small Alexandru Ioan Cuza University, Department of Mathematics\\
 and Octav Mayer Institute of Mathematics (Romanian Academy)\\ 
  700506 Ia\c{s}i, Romania}
\end{center}

\begin{abstract}
This work \ionut{represents a first contribution on} the problem of boundary stabilization 
\ionut{for} the phase field system of \ionut{Cahn--Hilliard type, which
models} the phase separation in a binary mixture. 
The feedback controller we design here is with actuation only on the temperature flow of the system, on one part of the boundary only. 
Moreover, it is of proportional type, given in an explicit form, expressed only in terms of the eigenfunctions of the Laplace operator, 
being easy to manipulate from the computational point of view. 
Furthermore, it \ionut{ensures} that the closed loop nonlinear system exponentially reaches the prescribed stationary solution provided that the initial \rev{datum} is close enough to~it.

\bigskip

\noindent \textbf{MSC 2010}: 93D15, 35K52, 35Q79, 35Q93, 93C20.
\bigskip 

\noindent \textbf{Keywords}: Cahn-Hilliard system, Feedback boundary control, 
Eigenfunctions, Closed loop system, Stabilization.
\end{abstract}

\pagestyle{myheadings}
\newcommand\testopari{\sc Colli \ --- \ Gilardi \ --- \ Munteanu}
\newcommand\testodispari{\sc Stabilization of a Cahn--Hilliard system}
\markboth{\testopari}{\testodispari}

\section{Introduction}

Let $\Omega\subset \mathbb{R}^d$ \gianni{($d=3$ in the applications)} 
be open, bounded, connected, with sufficiently smooth boundary $\Gamma=\partial\Omega$, 
split \gianni{as} $\Gamma=\Gamma_1\cup\Gamma_2,$ \gianni{where} $\Gamma_1$ has nonzero \gianni{surface} measure. 
We consider the boundary controlled \gianni{problem that consists in the} \ionut{Cahn--Hilliard} system
\begin{equation}
\label{t1}
\left\{
  \begin{array}{l}
  (\theta+l_0\varphi)_t-\Delta \theta=0,\ \text{ in }(0,\infty)\times\Omega,
  \\
  \\
  \varphi_t-\Delta \mu=0,\ \text{ in }(0,\infty)\times\Omega,
  \\
  \\
  \mu=-\nu\Delta \varphi+F'(\varphi)-\gamma_0\theta,\ \text{ in }(0,\infty)\times\Omega,
  \end{array}
\right.\ 
\end{equation}
supplemented with the following boundary conditions
\begin{equation}
\label{t2}
\left\{
  \begin{array}{l}\frac{\partial \varphi}{\partial\textbf{n}}=\pdd\mu=0,\ \text{ on }(0,\infty)\times\Gamma,
  \\
  \\
  \pdd\theta=u,\ \text{ on }(0,\infty)\times\Gamma_1,\ \pdd\theta=0,\ \text{ on }(0,\infty)\times\Gamma_2, 
  \end{array} 
\right.\ 
\end{equation}
and with the initial data 
\begin{equation}
\label{t3}
\theta(0)=\theta_o,\ \varphi(0)=\varphi_o,\ \text{in }\Omega.
\end{equation}

In system (\ref{t1})-(\ref{t3}) the variables $\theta,\varphi$ and $\mu$ represent the temperature, 
the order parameter and the chemical potential, respectively; 
$\nu,l_0,\gamma_0$ are positive constants with some physical meaning; 
$F'$ is the derivative of the double-well potential
\begin{equation}
\label{t30}
F(\varphi)=\frac{(\varphi^2-1)^2}{4};
\end{equation}
and $\textbf{n}$ is the unit outward normal vector to the boundary $\Gamma$. 
Finally, $u$ is the control \gianni{acting only on one part of the boundary, namely~$\Gamma_1$}. 
The equations (\ref{t1})-(\ref{t3}) are known as \ionut{the} conserved phase \gianni{field} system, 
due to the mass conservation of~$\varphi$, which is obtained 
by integrating the second equation in (\ref{t1}) 
\gianni{in space and using the boundary condition for $\mu$ from~(\ref{t2})}. 
For more details on conserved phase field system one may check the works \cite{c12,c13,c14,c26}, 
as well the contributions \cite{c17,c30} in which a conserved phase \ionut{field} model, 
allowing further memory effects, is investigated.

Let $(\phi_\infty,\theta_\infty)\in H^4(\Omega)\times H^2(\Omega)$
be any stationary solution of the uncontrolled system (\ref{t1})-(\ref{t3}), i.e.,
\begin{equation}
\left\{
  \begin{array}{l}
  \nu\Delta^2\phi_\infty-\Delta F'(\phi_\infty)=-\Delta \theta_\infty=0,\ \text{ in }\Omega,
  \\
  \\
  \pdd{\phi_\infty}=\pdd{\Delta\phi_\infty}=\pdd{\theta_\infty}=0,\ \text{ on }\Gamma.
  \end{array} 
\right.\ 
\end{equation} 
\ionut{For a discussion on the existence of stationary solutions, see \cite[Lemma~A1]{b1}.}

We emphasize that different stationary profiles correspond to different types of phase \gianni{separation}. 
The aim of the present work is to design a control $u$, in feedback form 
(i.e., expressed as a function of $\varphi$ and~$\theta$) 
such as\ionut{,} once inserted into the equations (\ref{t1})-(\ref{t3}), the corresponding solution satisfies
$$
  \lim_{t\rightarrow\infty}(\varphi(t),\theta(t))=(\varphi_\infty,\theta_\infty)
  \quad \gianni{\hbox{in $L^2(\Omega)\times L^2(\Omega) $}},
$$
with an exponential rate of convergence, provided that 
the initial datum $(\varphi_o,\theta_o)$ is in a suitable neighborhood of $(\varphi_\infty,\theta_\infty)$. 
With other words, we control only the temperature \ionut{flow} 
on the part of the boundary $\Gamma_1$ of $\Gamma$, 
\gianni{in order} that the system behaves similarly as a prescribed type of phase transition. 
\ionut{We aim that it tends to} $(\varphi_\infty,\theta_\infty)$ exponentially fast as time goes on. 

Concerning the problem of stabilization of systems of the type (\ref{t1})-(\ref{t3}), 
we mention the work~\cite{b1}, \gianni{which} provides an internal stabilizing feedback, 
while, concerning the boundary stabilization case, 
the present work represents the first result in this direction. 
\ionut{We follow similar arguments as in \cite{b1}\ionut{,} but we also use ideas from \cite{ionut}, 
where boundary proportional type feedbacks are constructed in order to stabilize equilibrium solutions to parabolic-type equations. Then,}
we aim to design a boundary stabilizing feedback\gianni{, i.e., a map $(\theta,\varphi)\mapsto u$,} 
for $(\varphi_\infty,\theta_\infty)$ in (\ref{t1})-(\ref{t3}).
More precisely, our goal is to prove Theorem~\ref{ter1} below, 
that amounts to saying that the feedback 
\begin{equation}
\label{ie11}
  \begin{aligned}
  &u = u(\theta,\varphi) =
  \left< \ionut{\Lambda_S {\bf A  O}}  
  ,\mathbf{1}\right>_N,
  \end{aligned}
\end{equation}
once plugged into equations (\ref{t1})-(\ref{t3}), yields the exponential stability of them. 
The ingredients of \eqref{ie11} are defined in the next section
(see, in particular, \ionut{equations~(\ref{t20})--(\ref{t25}) and Remark~\ref{REM})}.
Here, we only can roughly summarize as follows.
The term \ionut{${\bf O}$ reads as}
\begin{equation*}
\ionut{{\bf O}}
= \ionut{{\bf O}}(\theta,\varphi)
:=\left(
  \begin{array}{l}           
  \left<\varphi,\phi_1\right>
  +\left<\alpha_0(\theta+l_0\varphi),\psi_1\right>
  -\left<\varphi_\infty,\phi_1\right>
  -\left<\alpha_0(\theta_\infty-l_0\varphi_\infty),\psi_1\right>
  \\
  \\ 
  \left<\varphi,\phi_2\right>
  +\left<\alpha_0(\theta+l_0\varphi),\psi_2\right>
  -\left<\varphi_\infty,\phi_2\right>
  -\left<\alpha_0(\theta_\infty-l_0\varphi_\infty),\psi_2\right>
  \\
  \\ \hskip3cm \dots \ \ \ \dots \ \ \ \dots\\ 
  \\
  \left<\varphi,\phi_N\right>
  +\left<\alpha_0(\theta+l_0\varphi),\psi_N\right>
  -\left<\varphi_\infty,\phi_N\right>
  -\left<\alpha_0(\theta_\infty-l_0\varphi_\infty),\psi_N\right>
  \end{array}
\right)
\end{equation*}
where $\alpha_0:=(\gamma_0/l_0)^{1/2}$,
the symbol $\left<\,\cdot\,,\,\cdot\,\right>$ denotes the standard inner product in~$L^2(\Omega)$
and the system $\left\{(\phi_j,\psi_j)\right\}_{j=1}^N$ 
represents the set of the eigenvectors corresponding to the nonpositive eigenvalues
of the linear operator 
obtained from the linearization of the system (\ref{t1}) around the equilibrium profile $(\varphi_\infty,\theta_\infty)$
after a suitable change of unknown functions. 
Moreover, $A$~is a square constant matrix of order~$N$, 
obtained by some perturbations of the Gram matrix of the system 
$\left\{\psi_j|_{\Gamma_1}\right\}_{j=1}^N$ in $L^2(\Gamma_1)$,
$T$~is a matrix that depends on the traces of the functions $\psi_j,\ j=1,2,\dots,N$, on~$\Gamma_1$,
and $\mathbf{1}$ is the column $N$-vector 
$$\ionut{(1,\dots,1)^T := \left(\begin{array}{c} 1\\ 1\\ \vdots\\ 1 \end{array}\right) .}$$

We emphasize that the stabilizing feedback $u$ is finite-dimensional, linear, given in an explicit form 
that is easy to manipulate from the computational point of view, 
with actuation only on the temperature variable $\theta$, on one part of the boundary only, namely $\Gamma_1$. 
Its design is based on the ideas from the work \cite{ionut}. 
We  mention that, similar proportional-type feedbacks were constructed for stabilizing other important models as: 
the phase field system in \cite{ionut1}, the \rev{3D} periodic channel flow in \cite{ionut6}, 
the \rev{2D} periodic MHD channel flow in \cite{ionut7}, for parabolic-type equations with memory in \cite{ionut2,ionut3}, 
and even for stochastic parabolic equations in \cite{ionut5}.  
We claimed that the proposed feedback is easy to use in numerical simulations. 
This is indeed so. 
The paper \cite{ionut4} provides a numerical example concerning the stabilization of Fischer's equation, proving the efficiency of the controller. \ionut{Let us mention also the contibutions
\cite{pier1, pier2, pier3, pier4} devoted to the analysis of optimal control problems related to Cahn--Hilliard systems with a boundary control.}

\section{Design of a proportional stabilizing feedback}

As in the Introduction, $\Omega\subset \mathbb{R}^d$ is an open set
we assume to be bounded, connected, with sufficiently smooth boundary $\Gamma=\partial\Omega$, 
split as $\Gamma=\Gamma_1\cup\Gamma_2$, where $\Gamma_1$ has a positive surface measure.
We set $H=L^2(\Omega)$ to be the space of all square Lebesgue integrable functions on $\Omega$ 
and we denote by $\|\cdot\|$ and $\left<\,\cdot\,,\,\cdot\,\right>$ its classical norm and scalar product, respectively. 
The same notations are used for the powers of~$H$,
while the symbols $\left<\,\cdot\,,\,\cdot\,\right>_0$ and $\left<\,\cdot\,,\,\cdot\,\right>_N$
stand for the inner products in $L^2(\Gamma_1)$ and $\mathbb{R}^N$, respectively.
We set $H^m(\Omega),\ m>0,$ for the well-known Sobolev spaces of order $m$. 
For a generic normed space~$X$ (different from~$L^2(\Omega)$ or a power of~it), 
$\|\cdot\|_X$ is its corresponding norm.

As in \cite{b1}, we prefer to make a function transformation in (\ref{t1}), namely
\begin{equation}
\label{t5}
\sigma:=\alpha_0(\theta+l_0\varphi),
\end{equation}
with $\alpha_0>0$ chosen such that
\begin{equation}
\label{t6}
\frac{\gamma_0}{\alpha_0}=\alpha_0l_0=:\gamma>0,
\end{equation}
that is
\begin{equation}
\label{t7}
\alpha_0=\sqrt{\frac{\gamma_0}{l_0}}.
\end{equation}
Writing the system (\ref{t1})-(\ref{t3}) in the variables $\varphi$ and $\sigma$ and using (\ref{t6}) and the notation
\begin{equation}
\label{t8}
l:=\gamma_0l_0,
\end{equation}
it yields the equivalent nonlinear system
\begin{equation}
\label{t9}
\left\{
  \begin{array}{l}
  \varphi_t+\nu\Delta^2\varphi-\Delta F'(\varphi)-l\Delta\varphi+\gamma\Delta\sigma=0,\
  \text{ in }(0,\infty)\times\Omega,
  \\
  \\
  \sigma_t-\Delta\sigma+\gamma\Delta\varphi=0,\
  \text{ in }(0,\infty)\times\Omega,
  \\
  \\
  \frac{\partial\varphi}{\partial\mathbf{n}}=0,\ \text{ on }(0,\infty)\times\Gamma,
  \\
  \\
  \frac{\partial\Delta\varphi}{\partial\mathbf{n}}=-\frac{\gamma_0}{\nu}u,\ \frac{\partial\sigma}{\partial\mathbf{n}}=\alpha_0u,\
   \text{ on }(0,\infty)\times\Gamma_1,
  \\
  \\
  \frac{\partial\Delta\varphi}{\partial\mathbf{n}}=\frac{\partial\sigma}{\partial\mathbf{n}}=0,\
  \text{ on }(0,\infty)\times\Gamma_2.
  \end{array}
\right.\ 
\end{equation}

The classical  reduction of the problem, to a null stabilization one,  is done via the fluctuation variables 
\begin{equation}
\label{t10}
y:=\varphi-\phi_\infty,\ z:=\sigma-\sigma_\infty,
\end{equation}
\begin{equation}
\label{t11}
y_o:=\varphi_o-\phi_\infty,\ z_o:=\sigma_o-\sigma_\infty,
\end{equation}
where, clearly,  $\sigma_\infty:=\alpha_0(\theta_\infty+l_0\phi_\infty)$ and $\sigma_o:=\alpha_0(\theta_o+l_0\varphi_o).$  
And so, system  (\ref{t1})-(\ref{t3}) transforms into the equivalent null boundary stabilization problem
\begin{equation}
\label{t12}
\left\{
  \begin{array}{l}
  y_t+\nu\Delta^2 y-\Delta[F'(y+\phi_\infty)-F'(\phi_\infty)]-l\Delta y+\gamma\Delta z=0,
  \ \text{ in }(0,\infty)\times\Omega,
  \\
  \\
  z_t-\Delta z+\gamma \Delta y=0,\ \text{ in }(0,\infty)\times\Omega,
  \\
  \\
  \frac{\partial y}{\partial\mathbf{n}}=0,\ \text{ in }(0,\infty)\times\Gamma,
  \\
  \\
  \frac{\partial\Delta y}{\partial\mathbf{n}}=-\frac{\gamma_0}{\nu}u,\ \frac{\partial z}{\partial\mathbf{n}}=\alpha_0u,\ \text{ on }(0,\infty)\times\Gamma_1,
  \\
  \\
  \frac{\partial\Delta y}{\partial\mathbf{n}}=\frac{\partial z}{\partial\mathbf{n}}=0,\ \text{ on }(0,\infty)\times\Gamma_2,
  \\
  \\
  y(0)=y_o,\ z(0)=z_o.
  \end{array}
\right.\ 
\end{equation}

This is a fourth order differential system due to the presence of $\Delta^2$. Because of the presence of non-linearities under the Laplace operator, the linearized system will not be subtracted from \eqref{t12} in the classical way. More precisely: arguing similarly as in \cite[Eqs. (2.1)-(2.13)]{b1}, we set
\begin{equation}
\label{t13}
F_l:=\overline{F''_\infty}+l,
\end{equation}
where
\begin{equation}
\label{t14}
\overline{F_\infty''}:=\frac{1}{m_\Omega}\int_\Omega F''(\phi_\infty(\xi))d\xi,
\end{equation}
with $m_\Omega$ the Lebesgue measure of $\Omega$, and introduce the linear system
\begin{equation}
\label{t15}
\left\{
  \begin{array}{l}
  y_t+\nu\Delta^2 y-F_l\Delta y+\gamma\Delta z=0 \ \text{ in }(0,\infty)\times\Omega,
  \\
  \\
  z_t-\Delta z+\gamma \Delta y=0\ \text{ in }(0,\infty)\times\Omega,
  \\
  \\
  \frac{\partial y}{\partial\mathbf{n}}=0\ \text{ in }(0,\infty)\times\Gamma,
  \\
  \\
  \frac{\partial\Delta y}{\partial\mathbf{n}}=-\frac{\gamma_0}{\nu}u,\ \frac{\partial z}{\partial\mathbf{n}}=\alpha_0u\ \text{ on }(0,\infty)\times\Gamma_1,
  \\
  \\
  \frac{\partial\Delta y}{\partial\mathbf{n}}=\frac{\partial z}{\partial\mathbf{n}}=0\ \text{ on }(0,\infty)\times\Gamma_2,
  \\
  \\
  y(0)=y_o,\ z(0)=z_o.
  \end{array}
\right.\ 
\end{equation}
We remark that the above system is not the linearization of (\ref{t12}),
since the replacement of the nonlinear term is different from the usual one.

\subsection{Stabilization of the linearized system}

Set $\mathcal{A}:\mathcal{D}(\mathcal{A})\subset L^2(\Omega)\times L^2(\Omega)\rightarrow L^2(\Omega)\times L^2(\Omega),$ 
\begin{equation}
\label{bebe2}
\mathcal{A}:=\left[
  \begin{array}{cc}
  \nu \Delta^2-F_l\Delta& \gamma\Delta
  \\
  \gamma\Delta&-\Delta
  \end{array}
\right],
\end{equation}
having the domain
\rev{\begin{equation}
\label{gianni1}
\begin{aligned}
  D(\mathcal{A})=&\left\{
    (y\ z)^T\in L^2(\Omega)\times L^2(\Omega):\ 
    \mathcal{A}(y\ z)^T\in L^2(\Omega)\times L^2(\Omega),\right.
    \\
    &\hskip5cm \left.\hbox{$\frac{\partial y}{\partial\textbf{n}}
    =\frac{\partial \Delta y}{\partial\textbf{n}}
    =\frac{\partial z}{\partial\textbf{n}}$}
    =0 \text{ on }\Gamma
  \right\}
\end{aligned}
\end{equation}}%
endowed with its graph norm.
By the regularity of $\Omega$ it follows that 
$$\mathcal{D}(\mathcal{A})~\subset~ H^4(\Omega)~\times~ H^2(\Omega).$$ 
Also, we notice that $\mathcal{A}$ is self-adjoint. 
By \cite[Proposition 2.1]{b1}, we know that $\mathcal{A}$ is quasi $m$-accretive on $L^2(\Omega)\times L^2(\Omega)$ and its resolvent is compact. 
Therefore, $\mathcal{A}$ has a countable set 
$\left\{\lambda_j\right\}_{j=1}^\infty$ of real eigenvalues and a complete set of corresponding eigenvectors.
Moreover, all the eigenspaces are finite-dimensional and, by repeating each eigenvalue accordingly to its order of multiplicity,
we have that
\begin{equation}
  \lambda_1\leq\lambda_2\leq\lambda_3\leq\dots
  \text{ and }
  \lim_{j\to\infty} \lambda_j = +\infty.
  \label{eigenvalues}
\end{equation}We notice that  zero is an eigenvalue, and it is of multiplicity \ionut{greater than or equal to~$2$,} since 
$$\sqrt{\frac{1}{2m_\Omega}}\,(1\ 1)^T\ \text{ and } \ \sqrt{\frac{1}{2m_\Omega}}\,(-1\ 1)^T$$ are \ionut{two of} its corresponding eigenvectors. In what follows, \rev{we assume that 
\begin{equation}
\label{pier1} 
\mathbf{(H_0):}\qquad 
\text{the multiplicity of the null eigenvalue is equal to $2$.}
\end{equation}
This assumption is commented in the Remark~\ref{GIANNI} below.}%

By~\eqref{eigenvalues}, the number of nonpositive eigenvalues is finite, i.e.,
for some $N\in\mathbb{N}$, we have that
\begin{equation}
  \lambda_j< 0,\ j=1,2,...,N-2,\ \lambda_{N-1}=\lambda_N=0 \text{ and }  \lambda_j>0 \text{ for }j>N.  
\end{equation}
\rev{Although it would be possible to treat the case of semi-simple negative eigenvalues
following \cite{ionut}, for} the sake of simplicity, we assume that 
\begin{equation}
\label{t18} 
\mathbf{(H_1):}\qquad
\text{\ionut{each} negative eigenvalue is simple.}
\end{equation}
Denote by $\left\{(\varphi_j\ \psi_j)^T\right\}_{j=1}^\infty$ the corresponding eigenvectors, that is
\begin{equation}
\label{t17}
\left\{
  \begin{array}{l}
  \nu \Delta ^2\varphi_j-F_l\Delta\varphi_j+\gamma\Delta\psi_j=\lambda_j\varphi_j,\ \text{ in }\Omega,
  \\
  \\
  \gamma\Delta\varphi_j-\Delta\psi_j=\lambda_j\psi_j,\ \text{ in }\Omega,
  \\
  \\
  \frac{\partial\varphi_j}{\partial\mathbf{n}}=\frac{\partial\Delta\varphi_j}{\partial\mathbf{n}}=\frac{\partial\psi_j}{\partial\mathbf{n}}=0,\ \text{ on }\Gamma,
  \end{array}
\right.\ 
\end{equation} 
for all $j=1,2,\dots$. 

By the self-adjointness of $\mathcal{A}$ we may assume that the system 
$\left\{(\varphi_j\ \psi_j)^T\right\}_{j=1}^\infty$ 
forms an orthonormal basis in~$L^2(\Omega)\times L^2(\Omega)$ and is orthogonal in~$\mathcal{D}(\mathcal{A})$.

\begin{remark}\label{GIANNI}
\rev{The assumption $\mathbf{(H_0)}$ is related to the eigenvalue problem for~$-\Delta$ (with Neumann boundary conditions, of course), as we explain at once. The eigenvalue problem for the null eigenvalue of $\mathcal{A}$ reads
\begin{equation}
\label{gianni2}
\nu \Delta ^2\varphi-F_l\Delta\varphi+\gamma\Delta\psi=0 
  \quad \hbox{and} \quad
  \gamma\Delta\varphi-\Delta\psi=0 
\end{equation} 
with the boundary conditions written in \eqref{gianni1}. Since the second equation is equivalent to $\Delta (\gamma \varphi - \psi)=0$, i.e., $\psi = \gamma\varphi + C$, where $C$ is an 
arbitrary constant, the above problem reduces~to 
$$
 \Delta\bigl(\nu \Delta \varphi +(\gamma^2 - F_l)\varphi \bigr)=0.
$$
As $\frac{\partial}{\partial\mathbf{n}}\bigl(\nu \Delta \varphi +(\gamma^2 - F_l)\varphi \bigr)=0$, this means that
\begin{equation}
\label{gianni3}
-  \Delta\varphi = \bar{\lambda} \varphi + C_0 
\quad \hbox{with}\quad \bar{\lambda}:= \frac{\gamma^2 - F_l}\nu \, ,
\end{equation} 
where $C_0$ is an arbitrary constant. So, there are three cases.}

\smallskip\noindent
\rev{$i)$
$\bar{\lambda}$ is not an eigenvalue of $-\Delta$. Then \eqref{gianni3} has for any $C_0$ a
unique solution $\varphi$ and the solution to \eqref{gianni2} are given by $(\varphi \ \ \gamma \varphi + C)^T $. Thus, the eigenfunctions depend on the two independent constants $C_0$ and $C$, and $\mathbf{(H_0)}$ is satisfied.}

\smallskip\noindent
\rev{$ii)$ $\bar{\lambda}$ is a simple eigenvalue of $-\Delta$. Then, the constant $C$ in 
\eqref{gianni3} must be zero and the solutions to \eqref{gianni3} form a one-dimensional 
space. In this case, we have a similar situation as in~$i)$ and $\mathbf{(H_0)}$ is satisfied.}

\smallskip\noindent
\rev{$iii)$ $\bar{\lambda}$ is an eigenvalue of $-\Delta$ with multiplicity $m>1$. Again, $C$ in \eqref{gianni3} must be zero, but the solutions to \eqref{gianni3} depend on $m$ independent functions $\varphi^1, \dots, \varphi^m$. Then, the eigenspace of \eqref{gianni2}
is generated by the independent pairs 
$$
( \varphi^i \  \ \gamma \varphi^i + C_j)^T, \quad i,j=1, \dots , m. 
$$
So, its dimension is 
$2m >2$ and $\mathbf{(H_0)}$ fails.}

\smallskip\noindent
\rev{Of course, the case $iii)$ might occur and it is even easy to construct examples. However, it is an exceptional situation which can be avoided by a small perturbation of the coefficients. Therefore, $\mathbf{(H_0)}$~is generally satisfied.}
\end{remark}

The control design procedure, developed in \cite{ionut}, requires further knowledge about the 
eigenvectors of the liner operator $\mathcal{A}$. We refer to  the unique continuation 
property, of the eigenvectors, that will be described below. It is clear  by the form of the 
operator $\mathcal{A}$, which involves the Laplace operator, that the eigenvectors  $
(\varphi_j\ \psi_j)^T$ can be put in connection with the eigenfunctions of the Neumann-
Laplacian (this is indeed true, see \eqref{t40} below). In this light, let us denote by $
\left\{\mu_j\right\}_{j=1}^\infty$ and by $\left\{e_j\right\}_{j=1}^\infty$ 
the eigenvalues and the normalized eigenfunctions of the Neumann-Laplacian, respectively, 
i.e.,
$$
  \Delta e_j=\mu_j e_j \text{ in }\Omega
  \text{ and }
  \frac{\partial e_j}{\partial\mathbf{n}}=0 \text{ on }\Gamma,
$$
to which we simply refer to as the Laplace operator $\Delta$ in the sequel.
(We took $\Delta$ instead of $-\Delta$ to be in concordance with the form of the operator $\mathcal{A}$, 
that was firstly introduced in \cite{b1}.) 
We know that $\mu_j\leq0$ for all $j=1,2,\dots$, $\mu_j\rightarrow -\infty$ for $j\rightarrow \infty$. 
Moreover, $\left\{e_j\right\}_{j=1}^\infty$ forms an orthonormal basis 
in~$L^2(\Omega)$ which is orthogonal in~$H^1(\Omega)$.

We have the following result \ionut{which amounts to say that we can choose the eigenbasis
$\left\{(\varphi_j\ \psi_j)^T\right\}_{j=1}^N$ such that the property~\eqref{t44} below holds.}

\begin{lemma}\label{erol1}For all $j=1,2,...,N-2$,  there exists some eigenfunction $e_k$ of the Laplace operator, corresponding to the eigenvalue $\mu_k$, \ionut{which} satisfies
\begin{equation}
\label{nec-muk-1}
0>\mu_k\geq\frac{F_l-\gamma^2}{\nu},
\end{equation} 
 \ionut{and we can choose $\psi_j$ of the following form} 
\begin{equation}\label{t40}\psi_j\equiv\frac{\gamma\mu_k}{\sqrt{(\gamma\mu_k)^2+(\lambda_j+\mu_k)^2}}e_k.\end{equation} In this case the eigenvalue $\lambda_j$ is a root of the second  degree polynomial
$$X^2+[(F_l+1)\mu_k-\nu\mu_k^2]X-\nu\mu_k^3+(F_l-\gamma^2)\mu_k^2.$$

In particular, we deduce that necessarily $F_l-\gamma^2\leq 0$, in order \ionut{to have negative eigenvalues} for the operator $\mathcal{A}$; and
\begin{equation}\label{t44}\psi_j \not \equiv 0 \text{ on }\Gamma_1,\ \forall j=1,2,...,N.\end{equation}
\end{lemma}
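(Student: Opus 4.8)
The plan is to reduce the eigenvalue problem \eqref{t17} for $\mathcal{A}$ to a family of $2\times2$ algebraic eigenvalue problems by expanding everything in the Neumann--Laplace eigenbasis $\{e_k\}$. Since $\pdd{e_k}=0$ and hence $\pdd{\Delta e_k}=\mu_k\pdd{e_k}=0$ on $\Gamma$, both $(e_k\ 0)^T$ and $(0\ e_k)^T$ belong to $D(\mathcal{A})$, and a direct computation shows that $\mathcal{A}$ maps the two-dimensional space $V_k:=\mathrm{span}\{(e_k\ 0)^T,(0\ e_k)^T\}$ into itself, acting there through the symmetric matrix
\[ M_k:=\begin{pmatrix}\nu\mu_k^2-F_l\mu_k & \gamma\mu_k\\ \gamma\mu_k & -\mu_k\end{pmatrix}. \]
Because $\{e_k\}$ is complete in $L^2(\Omega)$ and $\mathcal{A}$ is self-adjoint, taking the inner products of $\mathcal{A}(\varphi_j\ \psi_j)^T=\lambda_j(\varphi_j\ \psi_j)^T$ with $(e_k\ 0)^T$ and $(0\ e_k)^T$ and writing $\varphi_j=\sum_k a_k e_k$, $\psi_j=\sum_k b_k e_k$ shows that $(a_k\ b_k)^T$, when nonzero, is an eigenvector of $M_k$ for $\lambda_j$. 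Hence $\lambda_j\in\mathrm{spec}(M_k)$ for some $k$, and under $\mathbf{(H_1)}$ (so for $j\le N-2$ the eigenvalue $\lambda_j<0$ is simple) this $k$ is unique, $\mu_k\ne0$ since the $\mu_k=0$ block is the zero matrix and cannot produce $\lambda_j<0$, and $(\varphi_j\ \psi_j)^T=(a_k e_k\ b_k e_k)^T\in V_k$.

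Then I would carry out the finite-dimensional computation. The characteristic polynomial of $M_k$ is $X^2-(\mathrm{tr}\,M_k)X+\det M_k$ with $\mathrm{tr}\,M_k=\nu\mu_k^2-(F_l+1)\mu_k$ and $\det M_k=\mu_k^2\,(F_l-\gamma^2-\nu\mu_k)$; expanding, this is exactly $X^2+[(F_l+1)\mu_k-\nu\mu_k^2]X-\nu\mu_k^3+(F_l-\gamma^2)\mu_k^2$, and $\lambda_j$ is one of its roots. The second component of $M_k(a_k\ b_k)^T=\lambda_j(a_k\ b_k)^T$ reads $\gamma\mu_k a_k=(\lambda_j+\mu_k)b_k$, i.e. $a_k=\frac{\lambda_j+\mu_k}{\gamma\mu_k}\,b_k$, and imposing the normalization $a_k^2+b_k^2=1$ (legitimate since $\|(\varphi_j\ \psi_j)^T\|=1$ and $\|e_k\|=1$) gives $b_k=\pm\gamma\mu_k\bigl((\gamma\mu_k)^2+(\lambda_j+\mu_k)^2\bigr)^{-1/2}$; fixing the sign yields $\psi_j=b_k e_k$ in the form \eqref{t40}.

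For the location of $\mu_k$ I would use that a real symmetric $2\times2$ matrix has a negative eigenvalue if and only if its determinant is negative or its trace is negative. As $\mu_k<0$: $\det M_k<0$ is equivalent to $\mu_k>\frac{F_l-\gamma^2}{\nu}$, which already forces $F_l-\gamma^2<0$; while $\mathrm{tr}\,M_k<0$ is equivalent to $\mu_k>\frac{F_l+1}{\nu}$, which can occur only when $F_l<-1<\gamma^2$, in which case $\frac{F_l+1}{\nu}>\frac{F_l-\gamma^2}{\nu}$, so again $\mu_k>\frac{F_l-\gamma^2}{\nu}$. In all cases $0>\mu_k>\frac{F_l-\gamma^2}{\nu}$, which is \eqref{nec-muk-1}, and $F_l-\gamma^2\le0$ is necessary for $\mathcal{A}$ to admit any negative eigenvalue.

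Finally, for \eqref{t44}: if $j\le N-2$ then $\psi_j=b_k e_k$ with $b_k\ne0$ (because $\gamma\mu_k\ne0$), so $\psi_j\equiv0$ on $\Gamma_1$ would force $e_k=\pdd{e_k}=0$ on $\Gamma_1$; since $e_k$ solves the second-order elliptic equation $\Delta e_k=\mu_k e_k$ in $\Omega$, the unique continuation property from the boundary portion $\Gamma_1$ then gives $e_k\equiv0$, contradicting $\|e_k\|=1$. If $j\in\{N-1,N\}$, assumption $\mathbf{(H_0)}$ guarantees that $\ker\mathcal{A}$ is the two-dimensional span of $(1\ 1)^T$ and $(-1\ 1)^T$, which are orthogonal in $L^2(\Omega)\times L^2(\Omega)$ and, being annihilated by $\mathcal{A}$, also orthogonal for the graph norm of $\mathcal{D}(\mathcal{A})$; choosing these two, normalized, as $(\varphi_{N-1}\ \psi_{N-1})^T$ and $(\varphi_N\ \psi_N)^T$ gives $\psi_{N-1}=\psi_N\equiv(2m_\Omega)^{-1/2}\ne0$, so \eqref{t44} holds for all $j=1,\dots,N$. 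I expect the delicate points to be the clean justification of the block reduction in the presence of the fourth-order boundary condition $\pdd{\Delta y}=0$ entering $D(\mathcal{A})$, and the citation of the appropriate form of the boundary unique continuation theorem on $\Gamma_1$; everything else is elementary $2\times2$ linear algebra.
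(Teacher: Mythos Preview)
Your proof is correct and follows the same overall strategy as the paper: expand the eigenfunctions in the Neumann--Laplace basis $\{e_k\}$, reduce to a $2\times2$ problem on each block $V_k$, and read off \eqref{t40} and the characteristic polynomial from the resulting matrix. Your presentation via the invariant subspaces $V_k$ and the block matrix $M_k$ is somewhat cleaner than the paper's, which instead tracks Fourier coefficients directly and analyzes the cubic~\eqref{t45} in the variable~$\mu_j$.

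There is one genuine methodological difference worth noting. For the bound \eqref{nec-muk-1} the paper regards the characteristic equation as a quadratic in $\lambda$, argues by contradiction (via Vi\`ete's relations) that the second root must be nonnegative, and deduces $-\nu\mu_k^3+(F_l-\gamma^2)\mu_k^2\le0$. You instead use the elementary criterion that a symmetric $2\times2$ matrix has a negative eigenvalue iff $\det<0$ or $\mathrm{tr}<0$, and handle the two cases separately. Both yield the same inequality; your argument is slightly more direct, while the paper's does not explicitly invoke $\mathbf{(H_1)}$ and therefore covers, in passing, the situation where the $\lambda$-eigenspace might have several $V_k$-components (the ``worst case scenario'' with two distinct negative roots $X_1,X_2$ of~$P$, each an eigenvalue of~$\Delta$). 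Since $\mathbf{(H_1)}$ is assumed anyway, your simplification is legitimate. The unique continuation step for~\eqref{t44} is identical in both proofs.
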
 
\begin{proof} Let any $j\in\left\{1,2,...,N-2\right\}.$ For the sake of simplicity of the notations we drop the indices $j$, that is we use the notations
$$(\varphi\ \psi)^T=(\varphi_j\ \psi_j)^T \text{ and }\lambda=\lambda_j.$$ We have that $(\varphi\ \psi)^T$ satisfies
\begin{equation}\label{t19}\left\{\begin{array}{l}\nu \Delta ^2\varphi-F_l\Delta\varphi+\gamma\Delta\psi=\lambda\varphi,\ \text{ in }\Omega,\\
\\
\gamma\Delta\varphi-\Delta\psi=\lambda\psi,\ \text{ in }\Omega,\\
\\
\frac{\partial\varphi}{\partial\mathbf{n}}=\frac{\partial\Delta\varphi}{\partial\mathbf{n}}=\frac{\partial\psi}{\partial\mathbf{n}}=0,\ \text{ on }\Gamma.\end{array}\right.\ \end{equation}

Let us decompose $\varphi$ and $\psi$ in the basis $\left\{e_j\right\}_{j=1}^\infty$ of the eigenfunctions of the Neumann Laplacean, as
$$\varphi=\sum_{j=1}^\infty \varphi^je_j,\ \psi=\sum_{j=1}^\infty \psi^je_j.$$ Successively scalarly multiplying \ionut{equations} (\ref{t19}) by $e_j,\ j=1,2,...$, using the boundary conditions and the \ionut{Green} formula, we deduce that
$$\left\{\begin{array}{l}(\nu\mu_j^2-F_l\mu_j-\lambda)\varphi^j+\gamma\mu_j\psi^j \ \  \ =0,\\
 \ \ \  \ \ \ \gamma\mu_j\varphi^j \ \ \ - \ \ \ \ \ (\lambda+\mu_j)\psi^j=0,\ \forall j\in \mathbb{N}^*.\end{array}\right.\ $$ For all $j$, this is a second order linear homogeneous system, with the unknowns $\varphi^j,\psi^j$. Computing the determinant of the matrix of the system, we get that 
\begin{equation}\label{t45}-\nu (\mu_j)^3+(F_l-\gamma^2-\nu\lambda)(\mu_j)^2+\lambda(F_l+1)\mu_j+\lambda^2=0,
\end{equation}
\ionut{for every $j$ such that $(\phi^j\ \psi^j)^T\not=(0\ 0)^T $.
On the other hand, since $(\phi\ \psi)^T\not=(0\ 0)^T $,
\eqref{t45} must hold for at least one value of~$j$,
and we now deal with such values.
Let us consider the polynomial}
\begin{equation}
  \gianni{P(X):={}} -\nu X^3+(F_l-\gamma^2-\nu\lambda)X^2+\lambda(F_l+1)X+\lambda^2 \gianni.
  \label{defP}
\end{equation}
\ionut{Since the product of its (complex) roots is $\lambda^2/\nu>0$ 
and one of them is the nonpositive real number~$\mu_j$,
we infer that $\mu_j<0$, which is one of the inequalities in~\eqref{nec-muk-1}.
Moreover, $P(0)=\lambda^2>0$ and $\lim_{X\to+\infty}P(X)=-\infty$,
so that $P(X)$ has a positive root.
It follows that all the roots are real, non-zero and that just one of them is positive.
At this point, there are several cases, namely:
there are two different negative roots $X_1$ and $X_2$ and both of them are eigenvalues of the Laplace operator;
there is just one negative root (necessarily a double root of the polynomial and an eigenvalue of~$\Delta$);
just one of the negative roots is an eigenvalue of~$\Delta$.
We treat only the first situation,
which is the ``worst scenario''; the other cases can be treated similarly. 
The relation \eqref{t44} holds as a consequence, 
since the eigenfunctions of the Neumann Laplacean cannot vanish identically on~$\Gamma_1$.
Denote by $X_1,X_2$ the two neagtive roots of the polynomial from \eqref{defP}.}
Assume that we have
$$\mu_k=\mu_{k+1}=...=\mu_{k+M}=X_1,$$and
$$\mu_{s}=\mu_{s+1}=...=\mu_{s+L}=X_2,$$i.e., $X_1$ is an eigenvalue of the Laplace operator of multiplicity $M+1$, and $X_2$ is an eigenvalue of the Laplace operator of multiplicity $L+1$. Of course, it may happen that only one of $X_1, X_2$ 
is an eigenvalue of the Laplace operator (we see that at least one is an eigenvalue in order not to have $\varphi\equiv\psi\equiv0$, that contradicts with the fact that $(\varphi\ \psi)^T$ is an eigenvector). In that case, all the below discussion can be easily reconsidered, and one should arrive to similar conclusions. However, we shall consider the "worst case scenario": $X_1,X_2$ are eigenvalues of the \ionut{Laplacean}. 

One can easily check that  each vector from the systems
$$\mathcal{X}_1:=\left\{\left(\frac{\lambda+X_1}{\sqrt{(\gamma X_1)^2+(\lambda+X_1)^2}}e_q\ \ \frac{\gamma X_1}{\sqrt{(\gamma X_1)^2+(\lambda+X_1)^2}}e_q\right)^T \!\!\!\!,\ q=k,k+1,...,k+M\right\}\!,$$ and
$$\mathcal{X}_2:=\left\{\left(\frac{\lambda+X_2}{\sqrt{(\gamma X_2)^2+(\lambda+X_2)^2}}e_q\ \ \frac{\gamma X_2}{\sqrt{(\gamma X_2)^2+(\lambda+X_2)^2}}e_q\right)^T\!\!\!\!,\ q=s,s+1,...,s+L\right\}\!,$$ has unit norm and satisfies equation (\ref{t19}), i.e., it is an eigenvector for $\mathcal{A}$, corresponding to the eigenvalue $\lambda$. Notice that $\mathcal{X}_1\cup \mathcal{X}_2$ contains $M+L+2$    orthogonal unit vectors,  that are, in particular, linearly independent.

 Furthermore, arguing as above, let any $(\tilde{\varphi}\ \tilde{\psi})^T$ verifying  (\ref{t19}), then necessarily 
$$\tilde{\varphi}=\tilde{\varphi}^{k}e_k+\tilde{\varphi}^{k+1}e_{k+1}+...+\tilde{\varphi}^{k+M}e_{k+M}+\tilde{\varphi}^{s}e_s+\tilde{\varphi}^{s+1}e_{s+1}+...+\tilde{\varphi}^{s+L}e_{s+L},$$
and
$$\tilde{\psi}=\tilde{\psi}^{k}e_k+\tilde{\psi}^{k+1}e_{k+1}+...+\tilde{\psi}^{k+M}e_{k+M}+\tilde{\psi}^{s}e_s+\tilde{\psi}^{s+1}e_{s+1}+...+\tilde{\psi}^{s+L}e_{s+L},$$that is $\tilde{\varphi}$ and $\tilde{\psi}$ are linear combinations of the eigenfunctions 
$$\left\{e_k,e_{k+1},...,e_{k+M},e_s,e_{s+1},...,e_{s+L}\right\}.$$ 

Taking into account that the system $\left\{e_k,e_{k+1},...,e_{k+M},e_s,e_{s+1},...,e_{s+L}\right\}$ is linearly independent, plugging the above $\tilde{\varphi}$ and $\tilde{\psi}$ into relations (\ref{t19}), and recalling that $\mu_k=...=\mu_{k+M}=X_1,\ \mu_{s}=\mu_{s+1}=...=\mu_{s+L}=X_2$, we deduce that
$$\tilde{\varphi}^{q}=\frac{\lambda+X_1}{\gamma X_1}\tilde{\psi}^q,\ q=k,k+1,...,k+M,$$and
$$\tilde{\varphi}^{q}=\frac{\lambda+X_2}{\gamma X_2}\tilde{\psi}^q,\ q=s,s+1,...,s+L.$$ 
Hence
$$\begin{aligned} (\tilde{\varphi}\ \tilde{\psi})^T=&\tilde{\psi}^k\left(\frac{\lambda+X_1}{\gamma X_1}e_k\ e_k\right)^T+...+\tilde{\psi}^{k+M}\left(\frac{\lambda+X_1}{\gamma X_1}e_{k+M}\ e_{k+M}\right)^T\\&+\tilde{\psi}^s\left(\frac{\lambda+X_2}{\gamma X_2}e_s\ e_s\right)^T+...+\tilde{\psi}^{s+L}\left(\frac{\lambda+X_2}{\gamma X_2}e_{s+L}\ e_{s+L}\right)^T.\end{aligned} $$
Or, equivalently,
$$\begin{aligned}&(\tilde{\varphi}\ \tilde{\psi})^T\\
&=\frac{\sqrt{(\lambda+X_1)^2+(\gamma X_1)^2}}{\gamma X_1}\tilde{\psi}^k\left(\frac{\lambda+X_1}{\sqrt{(\lambda+X_1)^2+(\gamma X_1)^2}}e_k\ \frac{\gamma X_1}{\sqrt{(\lambda+X_1)^2+(\gamma X_1)^2}}e_k\right)^T+...\\&
+\frac{\sqrt{(\lambda+X_1)^2+(\gamma X_1)^2}}{\gamma X_1}\tilde{\psi}^{k+M}\left(\frac{\lambda+X_1}{\sqrt{(\lambda+X_1)^2+(\gamma X_1)^2}}e_{k+M}\ \frac{\gamma X_1}{\sqrt{(\lambda+X_1)^2+(\gamma X_1)^2}}e_{k+M}\right)^T\\&
+\frac{\sqrt{(\lambda+X_2)^2+(\gamma X_2)^2}}{\gamma X_2}\tilde{\psi}^s\left(\frac{\lambda+X_2}{\sqrt{(\lambda+X_2)^2+(\gamma X_2)^2}}e_s\ \frac{\gamma X_2}{\sqrt{(\lambda+X_2)^2+(\gamma X_2)^2}}e_s\right)^T+...\\&
+\frac{\sqrt{(\lambda+X_2)^2+(\gamma X_2)^2}}{\gamma X_2}\tilde{\psi}^{s+L}\left(\frac{\lambda+X_2}{\sqrt{(\lambda+X_2)^2+(\gamma X_2)^2}}e_{s+L}\ \frac{\gamma X_2}{\sqrt{(\lambda+X_2)^2+(\gamma X_2)^2}}e_{s+L}\right)^T.\end{aligned}$$
Thus, we obtain that the above $(\tilde{\varphi}\ \tilde{\psi})^T$ may be written as a linear combination of the vectors from $\mathcal{X}_1\cup\mathcal{X}_2$. In other words $\mathcal{X}_1\cup\mathcal{X}_2$ forms a system of generators  for the subspace of the eigenvectors of the operator $\mathcal{A}$ corresponding to the eigenvalue $\lambda$. Recalling that  $\mathcal{X}_1\cup\mathcal{X}_2$ is linearly independent we conclude that, in fact, $\mathcal{X}_1\cup\mathcal{X}_2$ represents an orthonormal  basis of this subspace. Consequently, we may choose the eigenvector $(\varphi \ \psi)^T$ to be one of the elements from $\mathcal{X}_1\cup\mathcal{X}_2$, in particular, such that $\psi$ is of the form (\ref{t40}). 

\ionut{Now, we have the inequality $$\mu_k\geq \frac{F_l - \gamma^2}{\nu}$$ claimed in the statement. To do that, let} us consider now the identity (\ref{t45}) as one of unknown $\lambda$. So, $\lambda$ must be a root of the second degree polynomial
$$X^2+[(F_l+1)\mu_k-\nu\mu_k^2]X-\nu\mu_k^3+(F_l-\gamma^2)\mu_k^2.$$ We observe that the above polynomial has  a negative root, namely $\lambda$, and a nonnegative one. Indeed, assume by contradiction that both roots are negative. Then, by the \ionut{Vi\`ete} relations, we deduce that
$$-\nu\mu_k^3+(F_l-\gamma^2)\mu_k^2>0$$and
$$(F_l+1)\mu_k-\nu\mu_k^2>0.$$ Adding to the first relation, the second one, multiplied by $-\mu_k\geq 0$ , we deduce
$$-\mu_k^2(1+\gamma^2)>0,$$ which is absurd. Therefore, again by the \ionut{Vi\`ete} relations, we get that necessarily
$$-\nu\mu_k^3+(F_l-\gamma^2)\mu_k^2\leq0,$$ therefore $\mu_k$ must satisfy $\mu_k\geq\frac{F_l-\gamma^2}{\nu}$\ionut{, as claimed.}
\end{proof}

Following the ideas in \cite{ionut}, we transform the boundary control problem into an 
internal-type one, by lifting the boundary conditions into equations. 
In order to do this, let us define the so-called Neumann operator, as: 
given $a\in L^2(\Gamma_1)$ and $\eta>0$, we set $\ionut{D}_\eta a:=(y\ z)^T,$ the solution to the system 
\begin{equation}
\label{ero6}
\left\{
  \begin{array}{l}
  \begin{aligned} \nu\Delta^2 y&-F_l\Delta y+\gamma\Delta z-2\sum_{j=1}^N\lambda_j\left<(y,z),(\varphi_j,\psi_j)\right>\varphi_j\\
  &-\delta\left<(y,z),(\varphi_N,\psi_N)\right>\varphi_N+\eta y=0,\ \text{ in }\Omega,
  \end{aligned}
  \\
  \\
\begin{aligned}-\Delta z&+\gamma\Delta y-2\sum_{j=1}^N\lambda_j\left<(y,z),(\varphi_j,\psi_j)\right>\psi_j\\
&-\delta\left<(y,z),(\varphi_N,\psi_N)\right>\psi_N+\eta z=0,\ \text{ in }\Omega,
\end{aligned}
  \\
  \\
  \frac{\partial y}{\partial\mathbf{n}}=0,\ \text{ in }(0,\infty)\times\Gamma,
  \\
  \\
  \frac{\partial\Delta y}{\partial\mathbf{n}}=-\frac{\gamma_0}{\nu}a,\ \frac{\partial z}{\partial\mathbf{n}}=\alpha_0a,\ \text{ on }(0,\infty)\times\Gamma_1,
  \\
  \\
  \frac{\partial\Delta y}{\partial\mathbf{n}}=\frac{\partial z}{\partial\mathbf{n}}=0,\ \text{ on }(0,\infty)\times\Gamma_2,
  \end{array}
\right.\ 
\end{equation}  
where $\delta>0$ is such that $\lambda_1,...,\lambda_{N-1},\lambda_N+\delta$ are mutually distinct \ionut{(recall that $\lambda_{N-1} = \lambda_{N} =0 $)}; and $\eta$ is sufficiently large in order to ensure the existence of a unique solution to (\ref{ero6}). So, we define the map 
$$\ionut{D}_\eta\in \rev{\mathcal{L}}\bigl(L^2(\Gamma_1), H^1(\Omega)\times H^{1/2}(\Omega)\bigr).$$

Easy computations, involving the Green's formula, lead to
\begin{equation}
\label{ero7}\begin{split}&
\left<\ionut{D}_\eta a,(\varphi_j\ \psi_j)^T\right>=\frac{\alpha_0}{\eta-\lambda_j}\left<a,\psi_j\right>_0,\ j=1,2,\dots,N-1,\\&
\left<\ionut{D}_\eta a,(\varphi_N\ \psi_N)^T\right>=\frac{\alpha_0}{\eta-\lambda_N-\delta}\left<a,\psi_N\right>_0
\end{split}\end{equation}
where, we recall that,  
$\left<{}\cdot{},{}\cdot{}\right>_0$ stands for the classical scalar product in~$L^2(\Gamma_1)$.

Let 
$$
  0<\eta_1<\eta_2:=\eta_1+\frac{1}{N-1}<\eta_3:=\eta_1+\frac{1}{N-2}<\dots<\eta_N:=\eta_1+1
$$
be $N$ constants sufficiently large such as (\ref{ero6}) is well-posed for each of them.
For a future use, we set
\begin{equation}\label{defDeta}
  \hbox{$\ionut{D}_{\eta_i},\ i=1,2,\dots,N$,  the corresponding solutions of (\ref{ero6}). } 
\end{equation} 

Further, set 
\begin{equation}
\label{t20}
\Lambda_{\eta_k}:=\text{diag}
\left(
  \frac{1}{\eta_k-\lambda_1},\frac{1}{\eta_k-\lambda_2},\dots,\frac{1}{\eta_k-\lambda_{N-1}},\frac{1}{\eta_k-\lambda_N-\delta}
\right),\ k=1,2,\dots,N,
\end{equation} and
$$\Lambda_S:=\sum_{k=1}^N\Lambda_{\eta_k}.$$
Moreover, denote by
\begin{equation}
\label{t21}
B_k:=\Lambda_{\eta_k}\mathbf{B}\Lambda_{\eta_k},\ k=1,2,\dots,N,
\end{equation} 
where $\mathbf{B}$ is the Gram matrix of the system 
$\left\{\psi_j|_{_{\Gamma_1}}\right\}_{j=1}^N,$ in $L^2(\Gamma_1)$, i.e.,
\begin{equation}
\label{t22}
\mathbf{B}:=\left(
  \begin{array}{cccc}
  \left<\psi_1,\psi_1\right>_0
  & \left<\psi_1,\psi_2\right>_0&\dots&\left<\psi_1,\psi_N\right>_0
  \\
  \left<\psi_2,\psi_1\right>_0& \left<\psi_2,\psi_2\right>_0&\dots&\left<\psi_2,\psi_N\right>_0
  \\
  \dots&\dots&\dots&\dots
  \\
  \left<\psi_N,\psi_1\right>_0& \left<\psi_N,\psi_2\right>_0
  &\dots&
  \left<\psi_N,\psi_N\right>_0
  \end{array}
\right).
\end{equation}
Set 
\begin{equation}
\label{t25}
(B_1+B_2+\dots+B_N)^{-1}=:\mathbf{A}.
\end{equation}
One can show that the sum $B_1+B_2+...+B_N$ is indeed invertible by arguing similarly as in the Appendix in \cite{ionut}, and making use of Lemma \ref{erol1}, which states that, for all $j=1,2,\dots,N$, the trace of $\psi_j$ is not identically zero on $\Gamma_1$.

Now, we plug the feedback
\begin{equation}
u(t) = 
\left<
\ \Lambda_S\mathbf{A}
\left(
  \begin{array}{c}
  \left<(y(t)\ z(t))^T,(\varphi_1\ \psi_1)^T\right>
  \\
  \left<(y(t)\ z(t))^T,(\varphi_2\ \psi_2)^T\right>
  \\
  \dots
  \\
  \left<(y(t)\ z(t))^T,(\varphi_N\ \psi_N)^T\right>
  \end{array}
\right),
\left(
  \begin{array}{c}
  \psi_1(x)\\ \psi_2(x)\\ \dots \\ \psi_N(x)
  \end{array}
\right)
\right>_N,
\end{equation} 
into equations (\ref{t15}). We obtain  the following stability result, which is commented in the forthcoming Remark~\ref{REM}. 
\begin{proposition}
\label{erop1}
The solution $(y,z)$ to the system
\begin{equation}
\label{e501}
\left\{
  \begin{array}{l}
  y_t+\nu\Delta^2 y-F_l\Delta y+\gamma\Delta z=0, \ \text{ in }(0,\infty)\times\Omega,
  \\
  \\
  z_t-\Delta z+\gamma \Delta y=0,\ \text{ in }(0,\infty)\times\Omega,
  \\
  \\
  \frac{\partial y}{\partial\mathbf{n}}=0,\ \text{on }(0,\infty)\times\Gamma,
  \\
  \\
  \frac{\partial\Delta y}{\partial\mathbf{n}}=-\frac{\gamma_0}{\nu}
  \left<\Lambda_S\ \mathbf{A}
  \left(
    \begin{array}{c}
    \left<(y\ z)^T,(\varphi_1\ \psi_1)^T
    \right>
    \\
    \\
    \left<(y\ z)^T,(\varphi_2\ \psi_2)^T\right>
    \\\dots\\
    \left<(y\ z)^T,(\varphi_N\ \psi_N)^T\right>
    \end{array}
  \right),
  \left(
    \begin{array}{c}
    \psi_1\\ \psi_2\\ \dots \\\psi_N
    \end{array}
  \right)
  \right>_N, \text{ on }(0,\infty)\times\Gamma_1
  \\
  \\
  \frac{\partial z}{\partial\mathbf{n}}=\alpha_0
  \left<\Lambda_S\ \mathbf{A}
  \left(
    \begin{array}{c}
    \left<(y\ z)^T,(\varphi_1\ \psi_1)^T\right>
    \\
    \left<(y\ z)^T,(\varphi_2\ \psi_2)^T\right>
    \\\dots\\
    \left<(y\ z)^T,(\varphi_N\ \psi_N)^T\right>
    \end{array}
  \right),
  \left(
    \begin{array}{c}
    \psi_1\\ \psi_2\\ \dots \\\psi_N
    \end{array}
  \right)
  \right>_N,\ \text{ on }(0,\infty)\times\Gamma_1,
  \\
  \\
  \frac{\partial\Delta y}{\partial\mathbf{n}}=\frac{\partial z}{\partial\mathbf{n}}=0,\ \mbox{ on }(0,\infty)\times\Gamma_2,
  \\
  \\
  y(0)=y_o,\ z(0)=z_o,
  \end{array}
\right.\ 
\end{equation}
 satisfies the exponential decay 
\begin{equation}
\label{mu}
\|(y(t)\ z(t))^T\|^2\leq C_1e^{-C_2 t}\|(y_o\ z_o)^T\|^2,\ t\geq0,
\end{equation}
for some constants  $C_1,C_2>0$.
\end{proposition}

\begin{remark}
\label{REM} 
From the practical point of view, it is important to describe 
how one can compute the first $N$ eigenvectors of the operator $\mathcal{A}$, 
since the above feedback form is expressed only in terms of those eigenvectors. 
In order to do this, we shall mainly rely on the results from the proof of Lemma \ref{erol1}.

First of all, one should compute the first, lets say $K$, 
eigenvalues and eigenfunctions of the Neumann Laplace operator, i.e.,
$$
  \Delta e_j=\mu_j e_j,\ \text{ in }\Omega;\ \frac{\partial{e_j}}{\partial\mathbf{n}}=0,\ \text{ on }\Gamma;\ j=1,2,\dots,K;
$$
for which
$$
  \mu_j\geq \frac{F_l-\gamma^2}{\nu},\ j=1,2,\dots,K.
$$ 
We have that $\mu_1=0$ and $\mu_i\neq0$ for $i=2,3,\dots,K.$

Then, for each $j=1,2,\dots,K$ one should check if the polynomial
$$
  X^2+[(F_l+1)\mu_j-\nu\mu_j^2]X-\nu\mu_j^3+(F_l-\gamma^2)\mu_j^2
$$ 
has a nonpositive root. 
It it does, we denote it by $\lambda$, and this is in fact a nonpositive eigenvalue of the operator $\mathcal{A}$. 
Let us assume that we have found $N$ such nonpositive roots, and denote them by $\lambda_i,\ i=1,2,\dots,N$. 

Hence, for each $i=1,2,\dots,N$, either $\lambda_i=0$ then, one can take 
$$
  (\varphi_i\ \psi_i)^T=\left(\sqrt{\frac{1}{2m_\Omega}}\ \sqrt{\frac{1}{2m_\Omega}}\right)^T \text{ or }(\varphi_i\ \psi_i)^T=\left(-\sqrt{\frac{1}{2m_\Omega}}\ \sqrt{\frac{1}{2m_\Omega}}\right)^T;
$$
or there exists some $j\in\left\{2,\dots,K\right\}$,  
such that the eigenvalue $\lambda_i$ can be computed as a root of the following second degree polynomial
$$
  X^2+[(F_l+1)\mu_j-\nu\mu_j^2]X-\nu\mu_j^3+(F_l-\gamma^2)\mu_j^2.
$$ 
Then, the corresponding eigenvector is given by
$$
  (\varphi_i\ \psi_i)^T=
  \left(
    \frac{\lambda_i+\mu_j}{\sqrt{(\gamma \mu_j)^2+(\lambda_i+\mu_j)^2}}e_j\  \frac{\gamma \mu_j}{\sqrt{(\gamma \mu_j)^2+(\lambda_i+\mu_j)^2}}e_j
  \right)^T.
$$
So, one can conclude that the problem reduces to finding the first $K$ eigenvalues and eigenfunctions 
of the Neumann  Laplace operator, and computing the roots of some third degree polynomials.
\end{remark}

\noindent
\ionut{\textbf{\textit{Proof of Proposition \ref{erop1}.}}} The proof follows the same lines as that one of \cite[Theorem 2.3]{ionut}, that is why, here, we only give a sketch of it. 

It will be convenient to introduce the feedbacks 
\begin{equation}
\label{pi1}
u_k(t):=
\left< \mathbf{A}
\left(
  \begin{array}{c}
  \left<(y(t),z(t)),(\phi_1,\psi_1)\right>
  \\
  \\
  \left<(y(t),z(t)),(\phi_2,\psi_2)\right>
  \\\dots\\
  \left<(y(t),z(t)),(\phi_N,\psi_N)\right>
  \end{array}
\right),
\left(
  \begin{array}{c}
  \frac{\alpha_0}{\eta_k-\lambda_1}\gianni{\psi_1}
  \\
  \frac{\alpha_0}{\eta_k-\lambda_1}\gianni{\psi_2}
  \\\dots\\
  \frac{\alpha_0}{\eta_k-\lambda_N}\gianni{\psi_N}
  \end{array}
\right)
\right>_N,\ k=1,2,\dots,N,
\end{equation}
then to define the new variables
$$
  (\tilde{y},\tilde{z}):=(y,z)-\sum_{k=1}^ND_{\eta_k}u_k,
$$ 
where $D_{\eta_k}$ were introduced in~\gianni{\eqref{defDeta}}. 
We have that $u=u_1+\dots+u_N$. 
Consequently, $(\tilde{y},\tilde{z})$ has null boundary conditions, and satisfies the following system
\begin{equation}
\label{ni1}
\left\{
  \begin{array}{l}
  \tilde{y}_t+\nu\Delta^2 \tilde{y}-F_l\Delta \tilde{y}+\gamma\Delta \tilde{z}+\mathcal{B}_1(\tilde{y},\tilde{z})=0,
  \ \text{ in }(0,\infty)\times\Omega,
  \\
  \\
  \tilde{z}_t-\Delta \tilde{z}+\gamma \Delta \tilde{y}+\mathcal{B}_2(\tilde{y},\tilde{z})=0,\ \text{ in }(0,\infty)\times\Omega,
  \\
  \\
  \pdd{\tilde{z}}=\pdd{\Delta \tilde{y}}=\pdd {\tilde{y}}=0,\ \text{ in }(0,\infty)\times\Gamma,
  \\
  \\
  (\tilde{y},\tilde{z})(0)=(\tilde{y}_o,\tilde{z}_o):=(y_o,z_o)-\sum_{k=1}^ND_{\eta_k}u_k(0),
  \end{array}
\right.\ 
\end{equation}
where 
$$\begin{aligned}
  (\mathcal{B}_1\ \mathcal{B}_2)^T=&-
  \left(
    \sum_{i=1}^N D_{\eta_i}u_i
  \right)_t
  -2\sum_{i,j=1}^N\lambda_j\left<D_{\eta_i}u_i,(\phi_j\ \psi_j)^T\right>(\phi_j\ \psi_j)^T
  \\&+\sum_{i=1}^N\eta_iD_{\eta_i}u_i
	-\delta\sum_{i=1}^N\left<D_{\eta_i}u_i,(\phi_N\ \psi_N)^T\right>(\phi_N\ \psi_N)^T.\end{aligned}
$$
(For details, see the proof of \cite[Theorem 2.1, 2.3]{ionut}.) 

Successively scalarly multiplying equation \eqref{ni1}  by $(\phi_j\ \psi_j)^T,\ j=1,2,...,N$, we obtain
(see \cite{ionut})
\begin{equation}\label{ok1}\mathcal{Z}_t=-\eta_1\mathcal{Z}+\sum_{i=2}^N(\eta_1-\eta_i)B_i\mathbf{A}\mathcal{Z}-\frac{\delta}{2}\mathcal{O},\ t>0,\end{equation}where 
$$\mathcal{Z}:=\left(\begin{array}{c}\left<(y\ z)^T,(\phi_1\ \psi_1)^T\right>\\
\left<(\tilde{y}\ \tilde{z})^T,(\phi_2\ \psi_2)^T\right>\\
...\\
...\\
\left<(\tilde{y}\ \tilde{z})^T,(\phi_N\ \psi_N)^T\right>\end{array}\right)\text{ and }\mathcal{O}=\left(\begin{array}{c}0 \\
0\\
...\\
...\\
\left<(\tilde{y}\ \tilde{z})^T,(\phi_N\ \psi_N)^T\right>\end{array}\right).$$

\rev{Clearly}, following the same arguments as in the proof of \cite[Theorem 2.3]{ionut}, due to the choice of $\delta$ and $\eta_i,\ i=1,2,...,N$ and the fact that $B_i,\ i=1,2,...,N$ are positive definite, we deduce that the $\mathbb{R}^N-$norm of $\mathcal{Z}$ is exponentially decaying, that is
$$\|\mathcal{Z}(t)\|_N\leq Ce^{-\eta_1t},\|\mathcal{Z}(0)\|_N,\ \forall t\geq0.$$This means that the first $N$ modes of the solution $(\tilde{y}\ \tilde{z})^T$ are exponentially decaying. Concerning the rest of them, we recall that they are governed by a stable linear operator, hence, they are also exponentially decaying (see the proof in \cite[Theorem 2.1,2.3]{ionut}). Then,
 recalling the definition of $\tilde{y}$ and $\tilde{z}$, 
and the fact that the first $N$ modes are exponentially decaying, 
and that the feedbacks are written only in terms of these first $N$ modes, 
we conclude immediately that (\ref{mu}) holds true.\qed

\bigskip

Recalling the notations  (\ref{t10}), 
it follows immediately by Proposition \ref{erop1} the following result concerning the linearized system of (\ref{t9})
\begin{theorem}\label{ter1}
The unique solution to the linear system 
\begin{equation}
\label{ie13}
\left\{
  \begin{array}{l}
  (\theta+l_0\varphi)_t-\Delta \theta=0,\ \text{ in }(0,\infty)\times\Omega,
  \\
  \\
  \varphi_t-\Delta \mu=0,\ \text{ in }(0,\infty)\times\Omega,
  \\
  \\
  \mu=-\nu\Delta \varphi-\varphi-\gamma_0\theta,\ \text{ in }(0,\infty)\times\Omega,
  \\
  \\
  \frac{\partial \varphi}{\partial\textbf{n}}=\frac{\partial \mu}{\partial\mathbf{n}}=0,\ \text{ on }(0,\infty)\times\Gamma,
  \\
  \\
  \frac{\partial\theta}{\partial\mathbf{n}}=u=\left< \Lambda_S \mathbf{A} \mathbf{O},\mathbf{1}\right>_N,\ \text{ on }(0,\infty)\times\Gamma_1,
  \\
  \\
  \frac{\partial\theta}{\partial\mathbf{n}}=0,\ \text{ on }(0,\infty)\times\Gamma_2,
  \\
  \\
  \theta(0)=\theta_o,\ \varphi(0)=\varphi_o,\ \text{in }\Omega,
  \end{array} 
\right.\ 
\end{equation}
satisfies the exponential decay
$$
  \|(\varphi(t)\ \theta(t))^T-(\phi_\infty\  \theta_\infty)^T\|^2
  \leq C_5e^{-C_6t}\|(\varphi_o\ \theta_o)^T-(\phi_\infty\ \theta_\infty)^T\|^2,\ 
  \forall t\geq0,
$$
for some positive constants $C_5,C_6$.  
Here,  \begin{equation}
\label{q1-bis}
\begin{aligned}
&\mathbf{O}
= \mathbf{O}(\theta,\varphi) = 
\\
&=\left(
  \begin{array}{c}           
  \left<\varphi,\varphi_1\right>
  +\left<\alpha_0(\theta+l_0\varphi),\psi_1\right>
  -\left<\varphi_\infty,\varphi_1\right>
  -\left<\alpha_0(\theta_\infty-l_0\varphi_\infty),\psi_1\right>
  \\
  \\ 
  \left<\varphi,\varphi_2\right>
  +\left<\alpha_0(\theta+l_0\varphi),\psi_2\right>
  -\left<\varphi_\infty,\varphi_2\right>
  -\left<\alpha_0(\theta_\infty-l_0\varphi_\infty),\psi_2\right>
  \\
  \\\dots \ \ \ \dots \ \ \ \dots\\ 
  \\
  \left<\varphi,\varphi_N\right>
  +\left<\alpha_0(\theta+l_0\varphi),\psi_N\right>
  -\left<\varphi_\infty,\varphi_N\right>
  -\left<\alpha_0(\theta_\infty-l_0\varphi_\infty),\psi_N\right>,
  \end{array}
\right)
\end{aligned}
\end{equation}and $\mathbf{1}$ is the vector in $\mathbb{R}^N$ with all entries equal to $1$.
\end{theorem}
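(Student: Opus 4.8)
The plan is to obtain Theorem~\ref{ter1} as a direct corollary of Proposition~\ref{erop1} by reversing the two changes of unknowns \eqref{t5} and \eqref{t10}: I would check that, once written in the fluctuation variables $(y,z)$, the closed-loop system \eqref{ie13} coincides with the closed-loop system \eqref{e501}, and then transport the exponential decay back to $(\varphi,\theta)$. First I would set $\sigma:=\alpha_0(\theta+l_0\varphi)$ with $\alpha_0,\gamma,l$ as in \eqref{t5}--\eqref{t8}; eliminating $\mu$ and $\theta=\alpha_0^{-1}\sigma-l_0\varphi$, the first equation of \eqref{ie13} becomes $\sigma_t-\Delta\sigma+\gamma\Delta\varphi=0$, while the second and third combine, exactly as in the derivation of \eqref{t9} and \eqref{t15} and through the definition \eqref{t13}--\eqref{t14} of $F_l$, into $\varphi_t+\nu\Delta^2\varphi-F_l\Delta\varphi+\gamma\Delta\sigma=0$; moreover $\partial_{\mathbf{n}}\varphi=\partial_{\mathbf{n}}\mu=0$ and $\partial_{\mathbf{n}}\theta=u$ turn into $\partial_{\mathbf{n}}\varphi=0$ on $\Gamma$, $\partial_{\mathbf{n}}\Delta\varphi=-(\gamma_0/\nu)u$ and $\partial_{\mathbf{n}}\sigma=\alpha_0u$ on $\Gamma_1$, and homogeneous data on $\Gamma_2$. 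Then I would subtract the stationary profile, $y=\varphi-\phi_\infty$, $z=\sigma-\sigma_\infty$ with $\sigma_\infty=\alpha_0(\theta_\infty+l_0\phi_\infty)$ as in \eqref{t10}--\eqref{t11}; since the stationary data solve the time-independent version of this system, $(y,z)$ satisfies the interior equations and the homogeneous Neumann conditions of \eqref{e501} together with the same feedback-type conditions on $\Gamma_1$.

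The algebraic heart of the matter is that the feedback in \eqref{ie13} is literally the feedback in \eqref{e501}. Indeed, by bilinearity of $\langle\cdot,\cdot\rangle$ and the definition of $\sigma$, the $j$-th component of the vector $\mathbf{O}$ from \eqref{q1-bis} equals
$$
  \langle\varphi,\varphi_j\rangle+\langle\sigma,\psi_j\rangle-\langle\phi_\infty,\varphi_j\rangle-\langle\sigma_\infty,\psi_j\rangle
  =\langle(y\ z)^T,(\varphi_j\ \psi_j)^T\rangle ,\qquad j=1,\dots,N,
$$
so $\mathbf{O}$ is precisely the vector of the first $N$ projections $\langle(y\ z)^T,(\varphi_j\ \psi_j)^T\rangle$ that enter the feedback of \eqref{e501}, and therefore $u=\langle\Lambda_S\mathbf{A}\,\mathbf{O},\mathbf{1}\rangle_N$ is the boundary control prescribed there. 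Hence \eqref{ie13}, read in $(y,z)$, is exactly the system of Proposition~\ref{erop1}; in particular existence and uniqueness of the solution of \eqref{ie13} are inherited from that proposition.

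It then remains to carry the decay back. Proposition~\ref{erop1} gives $\|(y(t)\ z(t))^T\|^2\le C_1e^{-C_2 t}\|(y_o\ z_o)^T\|^2$ for all $t\ge0$. Since $(\theta,\varphi)\mapsto(\sigma,\varphi)=(\alpha_0(\theta+l_0\varphi),\varphi)$ is a bounded linear automorphism of $L^2(\Omega)\times L^2(\Omega)$ with bounded inverse, whose norm and that of its inverse depend only on $\alpha_0$ and $l_0$, there are constants $0<c\le C$ with
$$
  c\,\|(\varphi(t)\ \theta(t))^T-(\phi_\infty\ \theta_\infty)^T\|^2
  \le\|(y(t)\ z(t))^T\|^2
  \le C\,\|(\varphi(t)\ \theta(t))^T-(\phi_\infty\ \theta_\infty)^T\|^2
$$
for every $t\ge0$, and the same two-sided bound at $t=0$. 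Chaining these with the exponential bound for $\|(y\ z)^T\|^2$ yields the claimed estimate, with $C_5=(C/c)\,C_1$ and $C_6=C_2$.

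There is no hard analytical step here — all the dynamics is already contained in Proposition~\ref{erop1} — and the one place that requires care is the bookkeeping in the first paragraph: the control enters \eqref{ie13} plainly as $\partial_{\mathbf{n}}\theta=u$ on $\Gamma_1$, whereas in \eqref{e501} it is split between $\partial_{\mathbf{n}}\Delta y$ and $\partial_{\mathbf{n}}z$ with the weights $-\gamma_0/\nu$ and $\alpha_0$, while the zeroth-order term $-\varphi$ in the expression of $\mu$ in \eqref{ie13} must reproduce, after applying $-\Delta$ and absorbing the contribution of the elimination of $\theta$, the coefficient $-F_l$ of $\Delta$ in \eqref{t15} through \eqref{t13}--\eqref{t14}. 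Getting all these coefficients and Neumann traces to correspond is the real content of the argument.
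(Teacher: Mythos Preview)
Your proposal is correct and matches the paper's approach exactly: the paper's entire proof is the one-line remark that the result ``follows immediately by Proposition~\ref{erop1}'' upon recalling the substitutions \eqref{t10}, and you have faithfully unpacked this by carrying out the change of variables $(\theta,\varphi)\to(\sigma,\varphi)\to(y,z)$, identifying $\mathbf{O}$ with the vector of projections $\langle(y\ z)^T,(\varphi_j\ \psi_j)^T\rangle$, and transporting the decay estimate back through the bounded linear isomorphism $(\theta,\varphi)\leftrightarrow(\sigma,\varphi)$. Your account is in fact more detailed than what the paper provides.
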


\section{Conclusions}
In this paper we designed a boundary feedback stabilizing controller for stationary solutions to Chan-Hilliard system, modelling the phase separation. 
This represents the first result in literature concerning this subject. 
The feedback is with actuation only on the temperature variable, on one part of the boundary only. 
It is linear, of finite dimensional structure, given explicitly in a very simple form by (\ref{ie11}). 
It involves only the first $K\in\mathbb{N}$ eigenvalues of the Neumann Laplace operator (see Remark \ref{REM}), 
being easy for manipulation of numerical computations. 
The proofs are mainly based on the ideas from \cite{b1} and \cite{ionut}. 
The only disadvantage of this controller is that it is not robust, i.e., 
for small perturbations of the coefficients of the system, the stability is not guaranteed anymore. 
This lack of robustness may be overcomed by constructing, based on the present results, a Riccati type feedback. 
This represents the subject of a future work.


\section*{Acknowledgements}
\pier{This research activity has been performed in the framework of an
Italian-Romanian  {three-year project on ``Control and 
stabilization problems for phase field and biological systems'' financed by the Italian CNR and the Romanian Academy.} 
The present paper 
also benefits from the support of the Italian Ministry of Education, 
University and Research~(MIUR): Dipartimenti di Eccellenza Program (2018--2022) 
-- Dept.~of Mathematics ``F.~Casorati'', University of Pavia; 
the MIUR-PRIN Grant 2015PA5MP7 ``Calculus of Variations'';
the GNAMPA (Gruppo Nazionale per l'Analisi Matematica, la Probabilit\`a e le loro Applicazioni) of INdAM (Istituto Nazionale di Alta Matematica) for PC,
and the UEFISCDI project PN-III-ID-PCE-2016-0011 for IM.}



\begin{thebibliography}{99}

\bibitem{b1} V. Barbu, P. Colli, G. Gilardi, G. Marinoschi, 
Feedback stabilization of the Cahn--Hilliard type system 
for phase separation, J. Differential Equations 262 (2017), 2286-2334.

\bibitem{tan} V. Barbu, I. Lasiecka, R. Triggiani, 
Abstract settings for tangential boundary stabilization of Navier-Stokes
equations by high- and low-gain feedback controllers,
Nonlinear Anal. 64 (2006), 2704-2746.

\bibitem{c12} G. Caginalp, Conserved-phase field system: implications for 
kinetic undercooling, Phys. Rev. B 38 (1988), 789-791.

\bibitem{c13} G. Caginalp, The dynamics of a conserved phase field system: 
Stefan-like, Hele-Shaw, and Cahn-Hilliard models as asymptotic 
limits, IMA J. Appl. Math. 44 (1990), 77-94.

\bibitem{c14} G. Caginalp and E. Esenturk, Anisotropic phase field equations
of arbitrary order, Discrete Contin. Dyn. Syst. Ser. S 4 (2011), 311-350.

\ionut{%
\bibitem{pier1}
P. Colli, M.H. Farshbaf-Shaker, G. Gilardi, J. Sprekels,
Optimal boundary control of a viscous CahnÐHilliard system
with dynamic boundary condition and double obstacle potentials,
SIAM J. Control Optim. 53 (2015), 2696-2721.
%
\bibitem{c17} P. Colli, G. Gilardi, Ph. Lauren\c{c}ot, A.
Novick-Cohen, Uniqueness and long-time behavior for the conserved
phase-field system with memory, Discrete Contin. Dynam. 
Systems 5 (1999), 375-390.
%
\bibitem{pier2}
P. Colli, G. Gilardi, J. Sprekels,
A boundary control problem for the pure 
CahnÐHilliard equation with dynamic boundary conditions,
Adv. Nonlinear Anal. 4 (2015), 311-325.
%
\bibitem{pier3}
P. Colli, G. Gilardi, J. Sprekels,
A boundary control problem for the viscous CahnÐHilliard 
equation with dynamic boundary conditions,
Appl. Math. Optim. 73 (2016), 195-225.
%
\bibitem{pier4}
P. Colli, G. Gilardi, J. Sprekels,
Optimal boundary control of a nonstandard viscous 
CahnÐHilliard system with dynamic boundary condition,
Nonlinear Anal. 170 (2018), 171-196.
}

\bibitem{2} V. Komornik, Exact Controllability and Stabilization. 
The Multiplier Method. RAM: Research in Applied Mathematics. 
Masson, Paris; John Wiley \&\ Sons, Ltd., Chichester,  1994.

\bibitem{1} I. Lasiecka and R. Triggiani, 
Control Theory for Partial Differential Equations: 
Continuous and Approximation Theories.  Cambridge,
U.K.: Cambridge Univ. Press, 2000.

\bibitem{ionut4} H. Liu, P. Hu, I. Munteanu,  
Boundary feedback stabilization of Fisher's equation,
Systems Control Lett. 97 (2016), 55-60.

\bibitem{c26} A. Miranville, On the conserved phase-field model, 
J. Math. Anal. Appl. 400 (2013) 143-152.

\bibitem{ionut} I. Munteanu, Stabilisation of parabolic semilinear equations, 
Internat. J. Control 90 (2017), 1063-1076. 
 
\bibitem{ionut1} I. Munteanu,  Boundary stabilization of the phase field system by 
finite-dimensional feedback controllers, J. Math. Anal. Appl. 412 (2014), 964-975.

\bibitem{ionut2} I. Munteanu,  Boundary stabilisation of the Navier-Stokes equation with fading memory, Internat. J. Control 88 (2015), 531-542.

\bibitem{ionut3} I. Munteanu,  Stabilization of semilinear heat equations, 
with fading memory, by boundary feedbacks, J. Differential Equations 259 (2015), 454-472.

\bibitem{ionut5} I. Munteanu,  Stabilization of stochastic 
parabolic equations with boundary-noise and boundary-control,
J. Math. Anal. Appl. 449 (2017), 829-842.

\bibitem{ionut6} I. Munteanu,  Stabilization of a 3-D periodic channel 
flow by explicit normal boundary feedbacks, J. Dyn. Control Syst. 23 (2017), 
387-403.

\bibitem{ionut7} I. Munteanu, Boundary stabilization of a 2-D periodic 
MHD channel flow, by proportional feedbacks, ESAIM Control Optim. Calc. Var.  
23  (2017),  1253-1266.

\bibitem{c30} E. Rocca, Existence and uniqueness for the parabolic conserved 
phase-field model with memory, Commun. Appl. Anal. 8 (2004), 27-46.


\end{thebibliography}
\end{document}